\shorttitle{Convex hulls} 
\def\a{\alpha}
\def\R{\mathbb R}
\def\P{{\mathbb P}}
\def\labda1{\lambda_1}
\def\labda2{\lambda_2}
\def\e{\varepsilon}
\def\t{\tau}
\def\s{\sigma}
\def\comment#1{\relax}
\def\=in{\mathop{\rm =}}
\renewcommand{\P}{{\mathbb P}}
\renewcommand{\a}{\alpha}
\numberwithin{equation}{section}
\begin{document}
\title{Convex hulls of uniform samples from a convex polygon}

\authorone[Delft University of Technology]{Piet Groeneboom}

\addressone{Delft University, DIAM, Mekelweg 4, 2628CD Delft, The Netherlands}
\footnotetext[1]{Dedicated to the memory of Alexander Nagaev.}

\begin{abstract}
In \citet{gr:88} a central limit theorem for the number of vertices $N_n$ of the convex hull of a uniform sample from
the interior of convex polygon was derived. To be more precise, it was shown that $\{N_n-\tfrac23r\log
n\}/\{\tfrac{10}{27}r\log n\}^{1/2}$ converges in law to a standard normal distribution, if $r$ is the number of
vertices of the convex polygon from which the sample is taken. 

In the unpublished preprint \citet{nagaev_kham:91} a central limit result for the joint distribution of $N_n$ and $A_n$ is given,
where $A_n$ is the area of the convex hull, using a coupling of the sample process near the border of the polygon with a Poisson point
process as in \citet{gr:88}, and representing the remaining area in the Poisson approximation as a union of a doubly infinite sequence of
independent standard exponential random variables.

We derive this representation from the representation in \citet{gr:88} and also prove
the central limit result of \citet{nagaev_kham:91}, using this representation. The relation between the variances of the asymptotic
normal distributions of number of vertices and the area, established in \citet{nagaev_kham:91}, corresponds to a relation between the actual
sample variances of $N_n$ and $A_n$ in \citet{buchta:05}. We show how these asymptotic results all follow from one simple
guiding principle. This corrects at the same time the scaling constants in \citet{cabogr:94} and \citet{nagaev:95}.
\end{abstract}

\keywords{convex hulls}
\ams{60E20}{49G03;49F10}

\section{Introduction}
\label{sec:intro}
\setcounter{equation}{0}
Let $N_n$ be the number of vertices of the convex hull of a sample of size $n$, drawn uniformly from the interior of a convex polygon with
$r$ vertices. It was shown in \citet{gr:88} that
$$
\{N_n-\tfrac23r\log n\}/\{\tfrac{10}{27}r\log n\}^{1/2}\stackrel{\cal D}\longrightarrow {\cal N}(0,1),
$$
where ${\cal N}(0,1)$ denotes the standard normal distribution. This was proved by coupling the sample point process near the boundary of the
convex polygon with a Poisson point process, and showing that the relevant part of the sample process could be approximated sufficiently
closely by the coupled Poisson point process. The central limit result for $N_n$ was subsequently derived from a corresponding result for
the boundary of the convex hull of the approximating Poisson point process.
These methods were also applied to the area $A_n$ of the convex hull in \citet{cabogr:94}, but unfortunately the central
limit result $A_n$ contained a scaling error (see Remark \ref{remark:nagaev}).

\citet{nagaev_kham:91}, using the coupling of (part of the) sample point process with a Poisson process introduced in \citet{gr:88},
derived the following interesting central limit theorem for the joint distribution of the number of vertices and the area of the convex hull of a
uniform sample of
$n$ points on the interior of a convex polygon.

\begin{thm}
\label{theorem:N_n_A_n}
\mbox{\rm (Theorem 1 of \citet{nagaev_kham:91})}
Let $N_n$ denote the number of vertices of the convex hull of a uniform sample of size $n$ from the interior of a
convex polygon $C$  with $r\ge3$ vertices and area $A(C)$.
Moreover, let $A_n$ denote the area of the convex hull of the sample, and let the scaled ``remaining area" $\bar{A}_n$ be defined by
$$
\bar{A}_n=n\left\{A(C)-A_n\right\}/A(C)
$$
Then
\begin{equation}
\left(\tfrac{10}{27}r\log n\right)^{-1/2}
\left(N_n-\tfrac23r\log n,\bar{A}_n-\tfrac23r\log
n\right)\stackrel{{\cal D}}\longrightarrow {\cal N}(0,\Sigma),
\end{equation}
where ${\cal N}(0,\Sigma)$ denotes the normal distribution with expectation the zero vector and covariance matrix $\Sigma$ given by
$$
\Sigma=\left(\begin{array}{ll}
1 &1\\
1 &\displaystyle{\tfrac{14}{5}}
\end{array}
\right)
$$
\end{thm}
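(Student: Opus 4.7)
The plan is to derive the joint CLT from the corresponding statement for a Poisson process near $\partial C$, following and extending the strategy of \citet{gr:88}. First I would replace the uniform sample by the coupled Poisson point process of intensity $n/A(C)$ on $C$ used in \citet{gr:88}, which coincides with the sample with overwhelming probability on the shrinking strip near $\partial C$ to which all hull vertices are confined, the coupling error being negligible at the $\sqrt{\log n}$ scale for both $N_n$ and $\bar A_n$. Next I would localise: decompose $\partial C$ into $r$ wedge-shaped neighbourhoods of its vertices together with the interiors of its edges. The contribution of the edge interiors to both $N_n$ and $\bar A_n$ is $o_P(\sqrt{\log n})$, hence negligible at the CLT scale, while the wedges at distinct vertices are asymptotically independent by disjointness. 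This reduces the problem to proving a bivariate CLT for a sum of $r$ i.i.d.\ corner contributions.

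At each vertex $v$ I would apply the affine transformation of \citet{gr:88} sending $v$ to the origin and the two adjacent edges to the positive coordinate half-axes, and rescale so that the Poisson process becomes one of unit intensity on the quadrant. Following the route announced in the abstract, I would then re-express the convex hull process of this quadrant Poisson process, together with the triangular areas it cuts off, through a doubly infinite i.i.d.\ sequence $(E_k)_{k\in\mathbb Z}$ of standard exponentials, extracting this representation from the Markov-chain description in \citet{gr:88}. In these coordinates the local contribution to $N_n$ is a sum of short-window indicator functionals of the $E_k$'s, and the local contribution to $\bar A_n$ is a sum of bounded short-window functionals; the sums run over a window of length growing like $\log n$. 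A standard bivariate CLT for stationary, short-range-dependent sequences (for example by an $m$-dependent or blocking approximation) then yields joint asymptotic normality of the pair for each corner.

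It remains to compute the per-corner asymptotic mean, variance and covariance by evaluating moments of products of finitely many of the $E_k$. The mean pair is $(\tfrac23\log n,\tfrac23\log n)+O(1)$; the per-corner covariance matrix equals $\tfrac{10}{27}\Sigma$, with the $(1,1)$ entry already known from \citet{gr:88}, the new $(2,2)$ entry $\tfrac{10}{27}\cdot\tfrac{14}{5}$ arising from a $\Var(\Delta_k)$-type integral (where $\Delta_k$ is the triangular area cut off between consecutive hull vertices), and the $(1,2)$ entry $\tfrac{10}{27}$ from the corresponding cross-covariance integral. Summing over the $r$ independent corners multiplies the covariance matrix by $r$ and gives the stated conclusion. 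I expect the main obstacle to be the derivation of the explicit i.i.d.\ exponential representation and the integral computation yielding the constant $\tfrac{14}{5}$: since the earlier scaling errors in \citet{cabogr:94} and \citet{nagaev:95} evidently originate in precisely this step, careful bookkeeping of the constants there will be essential.
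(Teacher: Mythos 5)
Your outer reduction (Poisson coupling near $\partial C$, negligibility of the edge interiors, asymptotic independence of the $r$ corners, affine map to a unit-intensity Poisson process on the quadrant) is exactly the reduction the paper also performs, by reference to \citet{gr:88}. The divergence, and the gap, is in how you handle the quadrant problem. You propose to write both the local vertex count and the local cut-off area as sums of short-window functionals of a single i.i.d.\ exponential sequence $(E_k)$ and to invoke a stationary short-range-dependence CLT, extracting the entries of $\Sigma$ from variance and cross-covariance integrals. Two problems. First, the structure is not as you describe: in the representation the paper actually extracts (Theorem \ref{theorem:decomposition}), the exponentials $D_i$ are the areas of the triangles cut off between the $(i-1)$st and $i$th hull vertices, indexed by vertex number, so the remaining area over $[1,a]$ is the \emph{random} sum $\sum_{i=1}^{N(1,a)}D_i$, while $N(1,a)$ itself is a first-passage functional of the jump times $a_n=1+2\sum_{i\le n}D_i/V_{i-1}^2$ and is not a sum over a fixed window of functionals of the $D_i$ at all; it depends essentially on a second i.i.d.\ sequence $\tau_i=V(a_i)/V(a_{i-1})$ (with $(1-\tau_i)^2$ uniform), independent of the $D_i$. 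Your short-window picture has no slot for this second sequence, so the joint law you want to analyse is not even set up. Second, you defer the constants $\tfrac{14}{5}$ and $1$ to ``the corresponding integral'' --- which is precisely the computation in \citet{cabogr:94} and \citet{nagaev:95} that produced the wrong constants; you offer no mechanism that would catch such an error.

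The paper's route avoids both issues and is worth contrasting with yours. Since $\log a_n=-2\sum_{i<n}\log\tau_i+O_p(1)$, the switching relation $\{N(1,m)\ge n\}=\{a_n\le m\}$ shows that the normalized fluctuation of $N(1,m)$ is asymptotically a function of the $\tau_i$ alone, hence asymptotically independent of the partial sums of the $D_i$ (Corollary \ref{cor:asymp_independence}). Writing $D(1,a)-\tfrac13\log a=\sum_{i\le[EN(1,a)]}(D_i-1)+\{N(1,a)-[EN(1,a)]\}+R(a)$ with $R(a)=o_p(\sqrt{\log a})$ by Doob's inequality, the covariance matrix then drops out with no integrals at all, exactly as for a compound sum: $\Var(D(1,a))\sim EN(1,a)+\Var(N(1,a))=\tfrac13\log a+\tfrac5{27}\log a=\tfrac{14}{27}\log a$ and $\mathrm{Cov}(N(1,a),D(1,a))\sim\Var(N(1,a))$, which after normalization by $\tfrac5{27}\log a$ gives the entries $\tfrac{14}{5}$ and $1$. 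To complete your own plan you must either supply this asymptotic-independence argument or actually carry out and verify the covariance integrals; as written, the step from the exponential representation to the matrix $\Sigma$ is missing.
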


This is an extension of the central limit theorem for the number of vertices $N_n$ in \citet{gr:88}, and one
indeed recovers the central limit theorem given there by specializing the above result to the first coordinate. Unfortunately, the preprint
\citet{nagaev_kham:91}, containing this result, was never published. Moreover, it is written in Russian and its length is 50 pages, which
might also not have helped its spread in the scientific world.

In a private correspondence Christian Buchta revealed to me that the constant for the central limit
theorem for the second component (the remaining area) in \citet{nagaev_kham:91} was consistent with a relation he had derived himself
between the finite sample variances of $N_n$ and $\bar{A}_n$.

It is the purpose of the present note to give a simple proof of Theorem \ref{theorem:N_n_A_n}, deriving the result from the central limit
theorem for $N_n$ in \citet{gr:88}.
We  think that using the central limit theorem of \citet{gr:88} considerably simplifies the proof of Theorem \ref{theorem:N_n_A_n}
in \citet{nagaev_kham:91} and perhaps more clearly reveals the beauty of their idea. The relation between the variances in Theorem
\ref{theorem:N_n_A_n} can be considered to be a precursor (in an asymptotic sense) of the relation found between the finite sample
variances in \citet{buchta:05}.

For recent work on central limit theorems for random polytopes, see, e.g., \citet{bar1:10} and \citet{bar2:10}, where also references to earlier work in this area can be found.

\section{Representation of the remaining area by i.i.d.\ exponentials}
\label{sec:exponentials}
\setcounter{equation}{0}
We consider the Poisson point process ${\cal P}$ of intensity $1$ in $\R_+^2$, and its left-lower convex hull, as in \citet{gr:88}. To make the connection with
\citet{gr:88}, we first restate the definition of the process of vertices $\{W(a):a\in\R_+\}$ consisting of the vertices of the (left-lower) convex hull of a Poisson
process ${\cal P}$ with intensity 1 in $\R_+^2$.

\begin{figure}[!ht]
\begin{center}
\strut\input{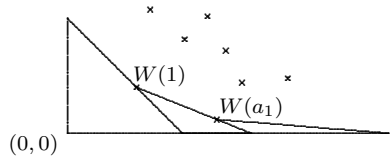}
\caption{$W(a)$-process}
\label{fig:start}
\end{center}
\end{figure}

\begin{definition}
\label{def:W(A)}
{\rm
For each $a>0$, $W(a)=(U(a),V(a))$ is the point of the realization of the Poisson process ${\cal P}$ on $\R_+^2$ such that all points of the
realization of ${\cal P}$ lie to the right of the line of the line $x+ay=c$ which passes through $W(a)$. If there are several of such points (which
happens with probability zero for fixed $a$), we define $U(a)$ ($V(a)$) as the supremum (infimum) of $x$-coordinates ($y$-coordinates) of points of this
type.
}
\end{definition}

We now have the following result (see also Theorem 2.1 of \citet{nagaev:95} for a result of this type).

\begin{thm}
\label{theorem:decomposition}
Let $a_0=1$,  let $a_1,a_2,\dots$ be the jump times of the process $\{W(a):a\ge1\}$, and let $D_0$ be the area of the
isosceles triangle $T_0$ with a basis, running through $W(1)$, and two equal sides along the $x$- and $y$-axis, meeting at the top at
the origin. Moreover, let
$D_i,\,i\ge1$, be the area of the triangle $T_i$, with top at $W(a_{i-1})$, basis along the $x$-axis, and sides along the lines
$x+a_{i-1}y=U(a_i)+a_iV(a_i)$ and
$x+a_iy=U(a_i)+a_iV(a_i)$, where $W(a_i)$, $U(a_i)$ and $V(a_i)$ are defined as in Definition \ref{def:W(A)}. Then
\begin{enumerate}
\item[(i)] The areas $D_0,D_1,\dots$ form an i.i.d.\ sequence of standard exponential random variables.
\item[(ii)] Let $S_i$ be the length of the line segment, connecting $W(a_{i-1})$ and $W(a_i)$, and let $L_i$ be the length of the
segment, obtained by extending the line segment from $W(a_{i-1})$ to $W(a_i)$ until it crosses the $x$-axis. Then the random variables
$S_i^2/L_i^2$, $i=1,2,\dots$ form an i.i.d.\ sequence of Uniform$(0,1)$ random variables, independent of $W(1)$. Moreover,
the $S_i^2/L_i^2$ are independent of the sequence $D_0,D_1,\dots$
\end{enumerate}
\end{thm}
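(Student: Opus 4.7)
The plan is to handle $D_0$ directly from the distribution of $W(1)$, and to treat $D_i$ and $S_i^2/L_i^2$ for $i\ge 1$ jointly by induction, using the independence property of the Poisson process on disjoint regions together with one key change of variables. For the base case, $W(1)$ is the point of ${\cal P}$ minimizing $x+y$; since the triangle $\{(x,y)\in\R_+^2 : x+y\le t\}$ has area $t^2/2$, we get $\P(U(1)+V(1)>t) = e^{-t^2/2}$, and hence $D_0 = (U(1)+V(1))^2/2$ is standard exponential.

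For the inductive step at index $i\ge 1$, condition on the observed history up through the jump at $a_{i-1}$, in particular on $W(a_{i-1})=(u,v)$ and $a_{i-1}$. Because the vertices $W(a_0),W(a_1),\dots$ of the left-lower convex hull have strictly decreasing $y$-coordinates, the ``future wedge''
\[
R \;=\; \bigl\{(x,y):\ 0\le y<v,\ x+a_{i-1}y\ge u+a_{i-1}v\bigr\}
\]
is disjoint from the region already explored at time $a_{i-1}$; by independence, ${\cal P}\cap R$ is, conditional on the past, a fresh Poisson process of intensity $1$. The natural coordinates on $R$ are
\[
\alpha \;=\; \frac{x-u}{v-y}, \qquad t \;=\; 1-\frac{y}{v},
\]
with $\alpha$ the value of $a$ at which the rotating supporting line $x+ay=u+av$ would pass through $(x,y)$ and $t$ the fractional distance from $W(a_{i-1})$ to the $x$-axis along that line. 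A short Jacobian computation gives $dx\,dy = tv^2\,d\alpha\,dt$, so in the new coordinates ${\cal P}\cap R$ becomes a Poisson process on $[a_{i-1},\infty)\times[0,1]$ of intensity $tv^2$.

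The next jump corresponds to the first point of this transformed process as $\alpha$ increases, and that point has joint density
\[
tv^2\exp\!\left(-\tfrac{1}{2}(\alpha-a_{i-1})v^2\right) \;=\; \tfrac{v^2}{2}\,e^{-v^2(\alpha-a_{i-1})/2}\cdot 2t
\]
on $\{\alpha>a_{i-1},\ t\in[0,1]\}$, so $a_i-a_{i-1}$ and $t_i$ are conditionally independent given the past. Here $D_i=\tfrac12(a_i-a_{i-1})v^2$ is standard exponential (this is just the wedge-area formula), and $t_i$ has density $2t$ on $[0,1]$, hence $t_i^2$ is Uniform$(0,1)$. Geometrically $W(a_i)$ sits at fractional distance $t_i$ from $W(a_{i-1})$ along the new anchor line toward the $x$-axis, so $t_i=S_i/L_i$ and $S_i^2/L_i^2\sim$ Uniform$(0,1)$. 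Since the conditional distributions of $D_i$ and $t_i^2$ do not depend on the conditioning variables, iterating the step proves all the joint i.i.d.\ and mutual independence claims in (i) and (ii), including independence of $\{S_i^2/L_i^2\}$ from $W(1)$ (which enters the history only at step $i=1$).

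The main obstacle is the geometric bookkeeping: verifying that the wedge $R$ is indeed disjoint from everything already explored and that $(\alpha,t)\in[a_{i-1},\infty)\times[0,1]$ parametrizes $R$ bijectively, so that the first point of the transformed Poisson process under the $\alpha$-ordering really is $W(a_i)$. Once this geometric setup is in place, the factorization of the joint density and the memoryless property of the Poisson process take care of the rest.
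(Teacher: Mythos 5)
Your proof is correct, and it reaches the conclusion by a more unified and self-contained route than the paper. The paper treats (i) and (ii) separately and leans on imported results from \citet{gr:88}: part (i) uses Lemma 2.4(i) there for $D_0$ and then the Markov property of $\{W(a)\}$ together with the observation that $\{D_i>z\}$ is the event that a wedge of area $z$ contains no Poisson points; part (ii) is read off from the explicit jump measure $M(a,w;B)=\int_0^y u\,1_B(au,-u)\,du$ of (2.22) there, which is exactly your statement that the next vertex has density proportional to its distance from $W(a_{i-1})$ along the supporting line, i.e.\ density $2t$ in your normalized coordinate. Your single change of variables $(x,y)\mapsto(\alpha,t)$ packages both facts at once: the product form $tv^2e^{-v^2(\alpha-a_{i-1})/2}=\bigl(\tfrac{v^2}{2}e^{-v^2(\alpha-a_{i-1})/2}\bigr)\cdot(2t)$ of the density of the first point in the $\alpha$-ordering delivers the exponential law of $D_i$, the law of $t_i^2=S_i^2/L_i^2$, and their conditional independence of each other and of the whole past in one stroke, whereas the paper argues the several independence claims separately and somewhat verbally. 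What your version buys is that it re-derives the jump measure rather than citing it and makes the mutual independence completely transparent; what the paper's version buys is brevity, since the computations are outsourced to the 1988 reference. Two points you should spell out if you write this up: (a) the identification of the unexplored part of $\{y<v\}$ with your wedge $R$, which rests on the inclusion $\{x+by<U(b)+bV(b)\}\cap\{y<v\}\subseteq\{x+a_{i-1}y<u+a_{i-1}v\}$ for all $b\le a_{i-1}$ and on the fact that points with $y\ge v$ reached by the rotating line for $a>a_{i-1}$ were already explored; and (b) conditioning on the history up to the random index $i-1$ is an application of the strong Markov property of the exploration, which the paper also needs and flags explicitly in its proof of (i).
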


\begin{proof}
(i). By Part (i) of Lemma 2.4 of
\citet{gr:88} we have, for $z\ge0$,
$$
P\left\{D_0>z\right\}=P\left\{\tfrac12\{U(1)+V(1)\}^2>z\right\}=\int_{\left\{(x,y):\tfrac12(x+y)^2>z\right\}}e^{-\tfrac12(x+y)^2}\,dx\,dy=e^{-z},
$$
showing that $D_0$ has a standard exponential distribution. Let ${\cal F}_a$ denote the $\s$-algebra, generated by the points $\{W(b),\,1\le b\le a\}$.
Then, as shown in \citet{gr:88}, the process of points $\{W(a),\,a\ge1\}$ is a Markov process w.r.t.\ the filtration $\{{\cal F}_a,\,a\ge1\}$. Now note
that, if $i\ge1$, $D_i>z$ exactly when there are no points in the triangle of area $z$, with top at $W(a_i)$, basis along the $x$-axis, and sides along
the lines
$x+a_{i-1}y=U(a_i)+a_iV(a_i)$ and $x+a_iy=U(a_i)+a_iV(a_i)$. Since this event is independent of the location of the points $W(a_0),\dots, W(a_{i-1})$, by
the Poisson property of the point process in $\R_+^2$, we get:
$$
P\left\{D_i>z\right\}=e^{-z},\,z\ge0,
$$
where the event $D_i>z$ is independent of $D_0,\dots,D_{i-1}$ (note that we can use the strong Markov property here).\\
(ii). The jump measure $M(a,w;\cdot)$ of the process $\{W(a):a>0\}$ is given by
\begin{equation}
\label{jump_measure}
M(a,w;B)=\int_0^yu1_B(au,-u)\,du,
\end{equation}
see (2.22) of \citet{gr:88}. Hence, conditioning on $W(a)=W(a_{i-1})=(x,y)$ and the event that there is a jump at time $a$, the location of the next
vertex has a density proportional to $u$ (representing the distance of $W(a)$ to the next vertex). So we get, for $z\in(0,1)$,
\begin{align*}
&P\left\{S_i^2/L_i^2<z\Bigm|W(a)>W(a-)=(x,y)\right\}\\
&=P\left\{S_i<L_i\sqrt{z}\Bigm|W(a)>W(a-)=(x,y)\right\}\\
&=P\left\{S_i<y\sqrt{z\left(1+a^2\right)}\Bigm|W(a)>W(a-)=(x,y)\right\}\\
&=\frac2{y^2\{1+a^2\}}\int_0^{y\sqrt{z(1+a^2)}}u\,du=z,
\end{align*}
where we use that $\tfrac12y^2\{1+a^2\}$ is the total measure of the jump measure on the line segment of length $y\sqrt{1+a^2}$, connecting $(x,y)$ and $(x+ay,0)$.
This implies that $S_i^2/L_i^2$ has a uniform distribution, in accordance with Theorem 2.1 of \citet{nagaev:95}. Moreover, since the distribution neither
involves the value of $a=a_i$ nor that of  $W(a_{i-1})$, the sequence of variables $S_i^2/L_i^2$ is i.i.d. For the same reason the variable $S_i^2/L_i^2$ is independent of $D_j$, $j\le i$. It is also seen that $S_i^2/L_i^2$ is independent of $D_j$, $j>i$, since the conditional distribution of $D_{i+1}$, given $W(a_i)$, is standard exponential, independently of the value of $W(a_i)$.
\end{proof}

\begin{corollary}
\label{cor:V(a)}
Let the sequences $a_0,a_1,\dots$ and $V(a_0),V(a_1),\dots$ be defined as in Theorem \ref{theorem:decomposition}, and let $\t_i=V(a_i)/V(a_{i-1})$,
$i=1,2,\dots$. Then the sequence of random variables  $\t_1,\t_2,\dots$ is i.i.d.\ and
$$
(1-\t_i)^2\sim \mbox{\rm Uniform}(0,1).
$$
Moreover, the random variables $\t_i$ are independent of $V(a_0)=V(1)$ and the areas $D_i$, where $D_i$ is defined as in Theorem \ref{theorem:decomposition}.
\end{corollary}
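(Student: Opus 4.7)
The plan is to identify the geometric ratio $S_i/L_i$ from Theorem \ref{theorem:decomposition}(ii) explicitly with $1-\tau_i$, after which the corollary is a direct translation of that theorem.

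First I would pin down the line through $W(a_{i-1})$ and $W(a_i)$. At the jump time $a_i$, the supporting line $x+a_i y=U(a_i)+a_iV(a_i)$ passes through both $W(a_{i-1})$ and $W(a_i)$, so this segment has slope $-1/a_i$. Setting $y=0$ in this line equation, its extension crosses the $x$-axis at $(U(a_{i-1})+a_iV(a_{i-1}),0)$.

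Next I would read off the two lengths in Theorem \ref{theorem:decomposition}(ii). The distance from $W(a_{i-1})=(U(a_{i-1}),V(a_{i-1}))$ to $(U(a_{i-1})+a_iV(a_{i-1}),0)$ equals $L_i=V(a_{i-1})\sqrt{1+a_i^2}$, and the distance from $W(a_{i-1})$ to $W(a_i)$ equals $S_i=(V(a_{i-1})-V(a_i))\sqrt{1+a_i^2}$ (since the $y$-drop is $V(a_{i-1})-V(a_i)>0$ and the line has slope $-1/a_i$). Dividing,
$$
\frac{S_i}{L_i}=1-\frac{V(a_i)}{V(a_{i-1})}=1-\tau_i\in(0,1),
$$
so in particular $(1-\tau_i)^2=S_i^2/L_i^2$.

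Finally I would quote Theorem \ref{theorem:decomposition}(ii): the variables $S_i^2/L_i^2$ are i.i.d.\ Uniform$(0,1)$, independent of $W(1)$ and of the sequence $D_0,D_1,\dots$. Since $\tau_i=1-\sqrt{S_i^2/L_i^2}$ is a deterministic function of $S_i^2/L_i^2$ (using $\tau_i<1$), the sequence $\tau_1,\tau_2,\dots$ inherits the i.i.d.\ property as well as independence of $V(1)$ (a component of $W(1)$) and of the $D_i$'s. There is no genuine obstacle here: the only point to verify carefully is the geometric identification of the slope of the connecting segment, which is exactly the support-line slope at the jump time $a_i$.
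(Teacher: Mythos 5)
Your proposal is correct and follows the paper's own route: the paper also reduces the corollary to Theorem \ref{theorem:decomposition}(ii) via the identity $1-\tau_i=S_i/L_i$, merely citing this as the ``proportionality relation, well-known from elementary geometry,'' whereas you verify it explicitly using the support line $x+a_iy=U(a_i)+a_iV(a_i)$ through both $W(a_{i-1})$ and $W(a_i)$. Your explicit coordinate computation of $S_i$ and $L_i$ is accurate, so nothing is missing.
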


\begin{proof}
This follows from part (ii) of Theorem \ref{theorem:decomposition} since
$$
1-\t_i=1-\frac{V(a_i)}{V(a_{i-1})}=\frac{V(a_{i-1})-V(a_i)}{V(a_{i-1})}=\frac{S_i}{L_i},\,i=1,\dots,
$$
where the last equality is the proportionality relation, well-known from elementary geometry.
\end{proof}

\vspace{0.3cm}
The following result is the key to Theorem \ref{theorem:N_n_A_n}.

\begin{corollary}
\label{cor:asymp_independence}
Let, for $m=2,3\dots$, $N(1,m)$ be the number of jumps of the process $\{W(a):a\in[1,m]\}$ and let $[EN(1,m)]$ be the largest integer smaller than or equal to $EN(1,m)$. Then:
\begin{enumerate}
\item[(i)]
$$
EN(1,m)=\tfrac13\log m,
$$
\item[(ii)] As $m\to\infty$ the bivariate random variable
$$
\left(\{N(1,m)-EN(1,m)\}/\sqrt{\tfrac5{27}\log m},\sum_{i=1}^{[EN(1,m)]}(D_i-1)/\sqrt{EN(1,m)}\right)
$$
converges in distribution to a bivariate normal distribution with expectation zero and covariance matrix equal to the identity matrix $I$.
\end{enumerate}
\end{corollary}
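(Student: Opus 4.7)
For part (i), I would read off from \eqref{jump_measure} that the Markov jump process $\{W(a):a\ge1\}$ has total jump rate at $(x,y)$ equal to $\int_0^y u\,du=y^2/2$, so
$$
EN(1,m)=\tfrac12\int_1^m E[V(a)^2]\,da.
$$
The area-preserving map $(x,y)\mapsto(cx,y/c)$ leaves the unit-intensity Poisson process invariant in law and sends the line $x+ay=b$ to $x+ac^2y=bc$; tracing the effect on the minimizer of $x+ay$ yields $V(a)\stackrel{d}{=}V(1)/\sqrt a$, whence $E[V(a)^2]=E[V(1)^2]/a$. Integrating $y^2$ against the density $e^{-(x+y)^2/2}$ of $(U(1),V(1))$ on $\R_+^2$ (cf.\ the proof of Theorem~\ref{theorem:decomposition}(i)) gives $E[V(1)^2]=\tfrac23$, and the identity $EN(1,m)=\tfrac13\log m$ follows.

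For part (ii), I first link the jump times to the $D_i$'s. The geometric setup in the proof of Theorem~\ref{theorem:decomposition}(i) identifies $D_i$ as $(a_i-a_{i-1})V_{i-1}^2/2$, where $V_{i-1}=V(a_{i-1})=V(1)\prod_{j=1}^{i-1}\t_j$ by Corollary~\ref{cor:V(a)}. Hence $a_i=a_{i-1}+2D_i/V_{i-1}^2$, and since $E[\log\t_j]<0$ the summands grow geometrically in $i$, so the last one dominates and
$$
\log a_i=-2\sum_{j=1}^{i-1}\log\t_j+O_p(1).
$$
A brief computation with the density $2(1-t)$ of $\t_j$ on $(0,1)$ (from Corollary~\ref{cor:V(a)}) gives $E[-2\log\t_j]=3$ and $\Var(-2\log\t_j)=5$, so $N(1,m)=\max\{i:a_i\le m\}$ is, up to an additive $O_p(1)$, the first-passage time of the random walk $S_k=-2\sum_{j=1}^k\log\t_j$ past $\log m$.

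The renewal CLT, combined with Anscombe's theorem, then produces the linearization
$$
N(1,m)-EN(1,m)=-\tfrac13\sum_{j=1}^{K}\bigl(-2\log\t_j-3\bigr)+o_p\bigl(\sqrt{\log m}\bigr),\qquad K=[EN(1,m)].
$$
By Theorem~\ref{theorem:decomposition} the pairs $(-2\log\t_j-3,\,D_j-1)$, $j\ge1$, are i.i.d.\ with mean zero and covariance $\mbox{diag}(5,1)$, the off-diagonal entry vanishing because $\t_j$ and $D_j$ are independent. The bivariate CLT thus yields
$$
\Bigl(\sum_{j=1}^K(-2\log\t_j-3)/\sqrt{5K},\;\sum_{j=1}^K(D_j-1)/\sqrt K\Bigr)\stackrel{{\cal D}}{\longrightarrow}{\cal N}(0,I),
$$
and substituting into the first coordinate, together with $5K/9\sim\tfrac{5}{27}\log m$, delivers the claim. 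The main obstacle is the quantitative control of the $O_p(1)$ remainder in $\log a_i$---arising from $\log V(1)$, $\log D_i$, and the sub-dominant geometric tail of the recursion for $a_i$---and showing that it does not disturb the $\sqrt{\log m}$-scale fluctuations after the Anscombe substitution of the random stopping index $N(1,m)$ by the deterministic $K$.
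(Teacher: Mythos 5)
Your proposal is correct in substance and rests on the same skeleton as the paper's proof, but it packages the decisive step differently, and that step is the one you leave unproved. For part (i) you derive $EN(1,m)=\tfrac12\int_1^m E[V(a)^2]\,da$ from the jump measure together with the scaling identity $V(a)\stackrel{d}{=}V(1)/\sqrt{a}$ and $E[V(1)^2]=\tfrac23$; the paper simply cites Theorem 2.4 of Groeneboom (1988), which records the same jump rate $1/(3a)$, so your self-contained computation is a harmless variation. For part (ii) both arguments use the recursion $a_n=1+2\sum_{i\le n}D_i/V_{i-1}^2$, the resulting representation $\log a_n=-2\sum_{j<n}\log\tau_j+O_p(1)$ (the paper makes your ``last summand dominates'' heuristic precise by showing $Y_n=V_{n-1}^2a_n$ is tight and bounded below by $2D_n$, so that $\log Y_n=O_p(1)$), the moments $E[-2\log\tau_j]=3$ and $\mathrm{var}(-2\log\tau_j)=5$, and the independence of the sequences $(\tau_j)$ and $(D_j)$ from Theorem \ref{theorem:decomposition} and Corollary \ref{cor:V(a)}. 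Where you genuinely diverge is in converting the first-passage structure into the joint CLT: the paper evaluates the joint distribution function $\P\{B_1(m)\ge x,\,B_2(m)\ge y\}$ at a fixed point, applies the switching relation $N(1,m)\ge n\Leftrightarrow a_n\le m$ with the deterministic index $n=n_{m,y}$, and factors the limit by independence, so it never needs a pathwise linearization of $N(1,m)$; you instead assert the stronger Anscombe-type identity $N(1,m)-EN(1,m)=-\tfrac13\sum_{j\le K}(-2\log\tau_j-3)+o_p(\sqrt{\log m})$ and then invoke a bivariate CLT for the i.i.d.\ pairs. Your route is more transparent and yields more (an explicit asymptotic representation of $N(1,m)$), but to close it you must supply the two ingredients you flag as ``the main obstacle'': the tight remainder in $\log a_n$ shifts the passage level, hence the passage time, only by $O_p(1)$; and the fluctuation of the centered walk $\sum_j(-2\log\tau_j-3)$ over the random window of width $O_p(\sqrt{\log m})$ between $N(1,m)$ and $K$ is $O_p((\log m)^{1/4})=o_p(\sqrt{\log m})$ by Doob's or Kolmogorov's maximal inequality --- exactly the device the paper uses in the proof of Theorem \ref{th:CL_for_N_D} to control the analogous window for the $D$-sums. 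With that maximal-inequality lemma written out your argument is complete; note finally that the minus sign in your linearization is immaterial, since $(-Z_1,Z_2)$ is again standard bivariate normal when $Z_1,Z_2$ are independent standard normals.
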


\begin{proof}
(i). This is part (i) of Theorem 2.4 of \citet{gr:88}, which is a simple consequence of the fact that the expected jump rate of the process $\{W(a):a\ge1\}$ is given by $1/(3a)$.\\
(ii). The area $D_i$ of the triangle $T_i$, as defined in Theorem \ref{theorem:decomposition}, is given by:
\begin{equation}
\label{recursion}
D_i=\tfrac12V(a_{i-1})(V(a_{i-1})+a_iV(a_{i-1})-V(a_{i-1})-a_{i-1}V(a_{i-1}))=\tfrac12V(a_{i-1})^2(a_i-a_{i-1}).
\end{equation}
Define
$$
U_i=U(a_i),\,V_i=V(a_i),\mbox{ and }W_i=\left(U_i,V_i\right),\,i=0,1,\dots
$$
It is clear that (\ref{recursion}) gives a tridiagonal system for solving $a_i$ in terms of the $D_i$ and $V_i$. We get, using
$a_0=1$,
\begin{equation}
\label{a_n}
a_n=1+2\sum_{i=1}^n\frac{D_i}{V_{i-1}^2}\,,\,n\ge1.
\end{equation}
We now define, for $n\ge1$,
$$
Y_n=V_{n-1}^2\left\{1+2\sum_{i=1}^n\frac{D_i}{V_{i-1}^2}\right\}=V_{n-1}^2a_n.
$$
Thus,
\begin{equation}
\label{a_n-representation}
\log a_n=-2\log V_{n-1}+\log Y_n,
\end{equation}
and hence we get the ``switching relation":
\begin{equation}
\label{switch}
N(1,m)\ge n \Longleftrightarrow a_n\le m \Longleftrightarrow -2\log V_{n-1} +\log Y_n\le \log m.
\end{equation}
By Corollary \ref{cor:V(a)}:
\begin{align}
\label{expectation_V_m}
E V_n^2=EV_0^2\prod_{i=1}^n\t_i^2=6^{-n}EV_0^2,\qquad E\left(\frac{V_n^2}{V_k^2}\right)=\prod_{i=k+1}^n E\t_i^2=
6^{-(n-k)},\,n>k\ge0.
\end{align}
Since, by Theorem \ref{theorem:decomposition}, the $\t_i$ are also independent of the $D_i$, we obtain, for all $k\ge1$,
\begin{align*}
EY_n&=6^{-(n-1)}EV_0^2+2\sum_{j=1}^n E\left(\frac{V_{n-1}^2}{V_{j-1}^2}\right)=
6^{-(n-1)}EV_0^2+2\sum_{j=1}^{n-1} 6^{-j}\\
&\le 6^{-(n-1)}EV_0^2+2\sum_{j=1}^{\infty}6^{-j}.
\end{align*}
This implies, by Markov's inequality,
$$
Y_n=O_p(1),\,n\to\infty.
$$
Since we also have $Y_n\ge 2D_n$, for all $n\ge1$, where $D_n$ has a standard exponential distribution, we obtain from
this:
\begin{equation}
\label{Y_m_bounded}
\left|\log Y_n\right|=O_p(1),\,n\to\infty.
\end{equation}
We now get from (\ref{a_n-representation}):
$$
\frac{\log a_n-3n}{\sqrt{5n}}=\frac{-2\log V_{n-1} +\log Y_n-3n}{\sqrt{5n}}=\frac{-2\log V_{n-1}-3n}{\sqrt{5n}}+O_p\left(n^{-1/2}\right),
$$
as $n\to\infty$. Moreover, since
\begin{equation}
\label{repr_V_k}
-2\log V_{n-1}=-2\sum_{i=1}^{n-1}\log\left(\frac{V_i}{V_{i-1}}\right) -2\log V_0=-2\sum_{i=1}^{n-1}\log\t_i -2\log V_0,
\end{equation}
we get by the central limit theorem:
\begin{equation}
\label{asymp_norm_a_m}
\frac{\log a_n-3n}{\sqrt{5n}}=\frac{-2\sum_{i=1}^{n-1}\log\t_i-3n}{\sqrt{5n}}+o_p(1)\stackrel{{\cal D}}\longrightarrow {\cal N}(0,1),\,n\to\infty,
\end{equation}
where ${\cal N}(0,1)$ denotes the standard normal distribution.

Let
$$
B_1(m)=\sum_{i=1}^{[EN(1,m)]}(D_i-1)/\sqrt{EN(1,m)},
$$
and 
$$
B_2(m)=\{N(1,m)-EN(1,m)\}/\sqrt{\tfrac5{27}\log m},
$$
and let, for fixed $y\in\R$, $n=n_{m,y}\in\mathbb N$ be defined by:
\begin{equation}
\label{def_n_m}
n=\left[EN(1,m)+y\sqrt{\tfrac5{27}\log m}\right],\,m\to\infty.
\end{equation}
Then we find, using (\ref{switch}) and (\ref{asymp_norm_a_m}), as $m\to\infty$,
\begin{align*}
\label{N(1,a)_asymptotics}
&\P\left\{B_1(m)\ge x,\,B_2(m)\ge y\right\}
=\P\left\{B_1(m)\ge x,\,N(1,m)\ge EN(1,m)+y\sqrt{\tfrac5{27}\log m}\right\}\nonumber\\
&\sim\P\left\{B_1(m)\ge x,\,N(1,m)\ge n\right\}
=\P\left\{B_1(m)\ge x,\,\log a_n\le \log m\right\}\nonumber\\
&=\P\left\{B_1(m)\ge x,\,\frac{\log a_n-3n}{\sqrt{5n}}\le \frac{\log m-3n}{\sqrt{5n}}\right\}\nonumber\\
&\sim\P\left\{B_1(m)\ge x,\,\frac{-2\sum_{i=1}^{n-1}\log\t_i-3n}{\sqrt{5n}}\le \frac{\log m-3EN(1,m)-y\sqrt{\tfrac5{3}\log m}}{\sqrt{5n}}\right\}\nonumber\\
&\sim \P\left\{B_1(m)\ge x\right\}\P\left\{\frac{-2\sum_{i=1}^{n-1}\log\t_i-3n}{\sqrt{5n}}\le -\frac{y\sqrt{\tfrac5{3}\log m}}{\sqrt{\tfrac5{3}\log m}}\right\}\nonumber\\
&=\P\left\{B_1(m)\ge x\right\}\P\left\{\frac{-2\sum_{i=1}^{n-1}\log\t_i-3n}{\sqrt{5n}}\le -y\right\}.
\end{align*}
where we use part (i), (\ref{def_n_m}) and and Corollary \ref{cor:V(a)} (independence of the $\t_i$ and the $D_i$) in the next to last line. Since, by (\ref{asymp_norm_a_m}),
\begin{align*}
\P\left\{\frac{-2\sum_{i=1}^{n-1}\log\t_i-3n}{\sqrt{5n}}\le -y\right\}\to \Phi(-y)=1-\Phi(y),
\end{align*}
where $\Phi$ is the standard normal distributon function, the result now follows.
\end{proof}

\section{The central limit theorem}
\label{sec:Nagaev}
\setcounter{equation}{0}
In this section we prove a 2-dimensional central limit theorem, by combining the results of the preceding section with the results in \citet{gr:88}.

\begin{thm}
\label{th:CL_for_N_D}
Let $N(a,b)$ be the number of jumps in the interval $[a,b]$ of the process $W$, as defined in Definition
\ref{def:W(A)}, and let $D(a,b)$ be the area of the union of the triangles $T_i$, corresponding to points of jump $a_i\in[a,b]$, as defined in Theorem
\ref{theorem:decomposition}. Then:
$$
\left(\tfrac5{27}\log(b/a)\right)^{-1/2}\left(N(a,b)-\tfrac13\log(b/a),D(a,b)-\tfrac13\log(b/a)\right)\stackrel{{\cal D}}\longrightarrow
N(0,\Sigma),\,b/a\to\infty,
$$
where $N(0,\Sigma)$ is a bivariate normal distribution with expectation $0$ and covariance matrix defined by
\begin{equation}
\label{def_sigma}
\Sigma=\left(\begin{array}{ll}
1 &1\\
1 &\displaystyle{\tfrac{14}{5}}
\end{array}
\right)
\end{equation}
\end{thm}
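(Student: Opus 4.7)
\emph{Plan.} The plan is to reduce the theorem to the case $a=1$, $b=m\to\infty$ by scaling, decompose $D(1,m)$ via the i.i.d.\ exponentials of Theorem \ref{theorem:decomposition}, and then extract the covariance $\Sigma$ from a linear transformation of the joint CLT in Corollary \ref{cor:asymp_independence}. For the reduction, the intensity-one Poisson process on $\R_+^2$ is invariant in law under the area-preserving maps $(x,y)\mapsto(\lambda x,y/\lambda)$, which fix the $x$-axis and send the half-plane $x+cy\le\text{const}$ to $\tilde x+c\lambda^2\tilde y\le\text{const}$. Choosing $\lambda=a^{-1/2}$ transforms $\{W(c):c\in[a,b]\}$ into a process with the same joint distribution as $\{\tilde W(c'):c'\in[1,b/a]\}$, while preserving $N(a,b)$ and $D(a,b)$ (the latter because the map is area-preserving and sends the defining lines of the triangles $T_i$ to analogous lines in the image). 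Hence it suffices to prove the claim with $a=1$, $b=m\to\infty$.

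Write $N=N(1,m)$, $M=[\tfrac13\log m]$, and let $Z_n:=\sum_{i=1}^n(D_i-1)$ be the mean-zero random walk built from the i.i.d.\ standard exponentials of Theorem \ref{theorem:decomposition}(i). Since the triangles $T_1,\dots,T_N$ partition the region of total area $D(1,m)$, we have $D(1,m)=\sum_{i=1}^N D_i$, and therefore
$$
D(1,m)-\tfrac13\log m=\bigl(N-\tfrac13\log m\bigr)+Z_N.
$$
The technical crux is to replace $Z_N$ by the deterministic-index partial sum $Z_M$. By Corollary \ref{cor:asymp_independence}(ii), $N-\tfrac13\log m=O_p(\sqrt{\log m})$, so for any $\eps>0$ one can choose $C$ with $P(|N-M|>C\sqrt{\log m})<\eps$; on the complementary event, Doob's $L^2$ maximal inequality for the unit-variance walk $Z_n$ yields
$$
|Z_N-Z_M|\le\max_{|n-M|\le C\sqrt{\log m}}|Z_n-Z_M|=O_p\bigl((\log m)^{1/4}\bigr)=o_p\bigl(\sqrt{\log m}\bigr).
$$
I expect this random-index replacement to be the most delicate point, because $N$ is coupled to the $D_i$ through the recursion $a_n=1+2\sum_{i=1}^n D_i/V_{i-1}^2$ and hence is not independent of them; however, the maximal-inequality bound is pathwise and survives this coupling.

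By Corollary \ref{cor:asymp_independence}(ii),
$$
\left(\frac{N-\tfrac13\log m}{\sqrt{\tfrac{5}{27}\log m}},\;\frac{Z_M}{\sqrt{\tfrac13\log m}}\right)\stackrel{{\cal D}}\longrightarrow (Z_1,Z_2)
$$
with $Z_1,Z_2$ independent standard normals, and the previous step lets us replace $Z_M$ by $Z_N$ in the limit. Dividing the displayed decomposition by $\sqrt{\tfrac{5}{27}\log m}$ and using $\sqrt{\tfrac13\log m}/\sqrt{\tfrac{5}{27}\log m}=3/\sqrt{5}$, the limiting pair equals $\bigl(Z_1,\,Z_1+\tfrac{3}{\sqrt{5}}Z_2\bigr)$, whose covariance matrix is
$$
\begin{pmatrix}1 & 1\\ 1 & 1+9/5\end{pmatrix}=\Sigma,
$$
which is the claimed conclusion.
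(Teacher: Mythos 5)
Your proof is correct and follows essentially the same route as the paper: the identical decomposition $D(1,m)-\tfrac13\log m=(N-\tfrac13\log m)+Z_M+(Z_N-Z_M)$, the same Doob maximal-inequality control of the random-index remainder over a window of width $O(\sqrt{\log m})$, and the same appeal to the asymptotic independence in Corollary \ref{cor:asymp_independence} to read off $\Sigma$. The only cosmetic difference is that you spell out the area-preserving scaling reduction to $a=1$, where the paper simply cites the stationarity transformation (2.27) of Groeneboom (1988).
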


\begin{proof}
As shown by the transformation to a stationary process (2.27) in \citet{gr:88}, the distribution of $N(a,b)$ only depends on the ratio
$b/a$. The same construction shows that the distribution of $D(a,b)$ only depends on the ratio $b/a$.
So we only have to prove the result for $a=1$ and $b>1$.

We know from Theorem 2.4 in \citet{gr:88} that $E N(1,a)$ $=\tfrac13\log a$ and $\mbox{var}(N(1,a))$ $\sim (5/27)\log a$, as $a\to\infty$. Moreover,
$$
D(1,a)=\sum_{a_i\in[1,a]}D_i=\sum_{a_i\in[1,a]} \mbox{area}(T_i),
$$
where the $T_i$ are the triangles of Theorem \ref{theorem:decomposition}. So we can consider $D(1,a)$ as a random sum of standard exponential random
variables, where the number of terms in the sum is equal to the random variable $N(a,b)$. Reasoning heuristically, as in the case of a compound Poisson distribution, we
would get
$$
E(D(1,a))=E N(1,a)=\tfrac13\log a,
$$
and
$$
\mbox{var}(D(1,a))=E N(1,a)+\mbox{var}(N(1,a))\sim\tfrac13\log a+\tfrac5{27}\log a=\tfrac{14}{27}\log a.
$$
We now show that we can prove the result by using this heuristic idea.

We write $D(1,a)-\tfrac13\log a$ as the sum of the terms $A_1(a)$ and $A_2(a)$, where
$$
A_1(a)=\sum_{i=1}^{[EN(1,a)]} D_i-\tfrac13\log a,
$$
defining $[EN(1,a)]$ as the largest integer not exceeding $EN(1,a)=\tfrac13\log a$, and
$$
A_2(a)=\left\{\begin{array}{ll}
\sum_{i=[EN(1,a)]+1}^{N(1,a)}D_i,\,&\mbox{ if }N(1,a)>[EN(1,a)]\\
&\\
-\sum_{i=N(1,a)+1}^{[EN(1,a)]}D_i,\,&\mbox{ if }N(1,a)\le [EN(1,a)].
\end{array}
\right.
$$
We now have, if $N(1,a)>[EN(1,a)]$
\begin{align*}
\sum_{i=[EN(1,a)]+1}^{N(1,a)}D_i=\sum_{i=[EN(1,a)]+1}^{N(1,a)}(D_i-1)+N(1,a)-[EN(1,a)],
\end{align*}
and similarly, if $N(1,a)\le[EN(1,a)]$,
\begin{align*}
-\sum_{i=N(1,a)+1}^{[EN(1,a)]}D_i=-\sum_{i=N(1,a)+1}^{[EN(1,a)]}(D_i-1)+N(1,a)-[EN(1,a)],
\end{align*}
where both sides are zero if $N(1,a)=[EN(1,a)]$. Hence we can write:
\begin{align*}
D(1,a)-\tfrac13\log a=A_1(a)+N(1,a)-[EN(1,a)]+R(a),
\end{align*}
where
$$
R(a)=\left\{\begin{array}{ll}
\sum_{i=[EN(1,a)]+1}^{N(1,a)}(D_i-1),\,&\mbox{ if }N(1,a)>[EN(1,a)]\\
&\\
-\sum_{i=N(1,a)+1}^{[EN(1,a)]}(D_i-1),\,&\mbox{ if }N(1,a)\le [EN(1,a)].
\end{array}
\right.
$$

Fix $\e>0$. 
By Theorem 2.4 in \citet{gr:88} there exists an $M=M(\e)>0$ and an $a_0=a_0(M)$ so that
$$
\P\left\{\left|\frac{N(1,a)-[EN(1,a)]}{\sqrt{\log a}}\right|> M\right\}<\e,\, a\ge a_0.
$$
Define
$$
n_{-}(a)=[EN(1,a)]-M\sqrt{\log a},\,\qquad n_{+}(a)=[EN(1,a)]+M\sqrt{\log a}\,.
$$
Then, by Doob's inequality,
\begin{align*}
&\P\left\{\max_{m\in\left[[E N(1,a)]+1,n_+(a)\right]}\left|\sum_{i=[E N(1,a)]}^{m}(D_i-1)\right|>\e\sqrt{\log a}\right\}\\
&\qquad+\P\left\{\max_{m\in\left[n_-(a),[E N(1,a)]\right]}\left|\sum_{i=m}^{[EN(1,a)]}(D_i-1)\right|>\e\sqrt{\log a}\right\}\\
&\le \frac{n_{+}(a)-n_{-}(a)+1}{\e^2\left(\log a\right)}\sim\frac{2M}{\e^2\sqrt{\log
a}}\to0,\,a\to\infty.
\end{align*}
These relations imply: $R(a)/\sqrt{\log a}=o_p(1)$, $a\to\infty$, and hence:
\begin{align}
\label{decomposition_D}
\frac{D(1,a)-E N(1,a)}{\sqrt{\log a}}
&=\frac{\sum_{i=1}^{[E N(1,a)]}(D_i-1)}{\sqrt{\log a}}
+\frac{N(1,a)-[E N(1,a)]}{\sqrt{\log a}}+o_p(1).
\end{align}
The result now follows from Corollary \ref{cor:asymp_independence} and Theorem 2.4 in \citet{gr:88}.
\end{proof}

\vspace{0.3cm}
Using the methods from \citet{gr:88} in going from the Poisson approximation to the sample process, one can now easily deduce the
central limit result Theorem \ref{theorem:N_n_A_n} from Theorem \ref{th:CL_for_N_D}. The latter method is also used in \citet{nagaev_kham:91}.

\begin{rem}
{\rm Instead of working directly with relation (\ref{recursion}), expressing the differences between successive slopes of the convex hull in
terms of the area of the corresponding rectangle and the $y$-coordinate of vertex at the intersection of the line segments with these
slopes,
\citet{nagaev_kham:91} write this relation first in the following form:
\begin{equation}
\label{recursion2}
D_i=\tfrac12V(a_{i-1})^2\left(\frac{U(a_i)-U(a_{i-1})}{V(a_{i-1})-V(a_i)}-\frac{U(a_{i-1})-U(a_{i-2})}{V(a_{i-2})-V(a_{i-1})}\right),
\end{equation}
and then deduce a recursive relation for the $U(a_i)$ in terms of the $V(a_i)$ and $D_i$ from this. They then define the random time
$$
\theta_T=\inf\left\{i:U(a_i)\ge T\right\},
$$
and consider sums of the form $\sum_{i=1}^{\theta_T}D_i$.
This seems to lead to more complicated proofs.
}
\end{rem}

\begin{rem}
\label{remark:nagaev}
{\rm
The scaling constants for the central limit theorem for the area in \citet{cabogr:94} are not correct, although a correct application of the methods
used in that paper would lead to the central limit theorem for the area, which is part of the central limit theorem \ref{theorem:N_n_A_n} above. We here tried to present the results of the unpublished preprint \citet{nagaev_kham:91} in an easily understandable way, where the presentation is considerably simplified by the use of martingales, Doob's inequality and the results from \citet{gr:88}. In view of this simpler
approach, and also the fact that Theorem \ref{theorem:N_n_A_n} is in fact a stronger (2-dimensional) result,
this approach seems preferable to the approach in \citet{cabogr:94}. On the other hand, the computations along the lines of \citet{cabogr:94} give precise information on the first and second moments, as shown below in section \ref{sec:simulation}.

Although \citet{nagaev:95} hints at the proof of the central limit theorem \ref{theorem:N_n_A_n}, there are many important missing steps, which
have to be traced down to the unpublished preprint \citet{nagaev_kham:91}. It seems fair to say that without knowledge of this preprint,
deducing the result from \citet{nagaev:95} is pretty hard. Moreover, \citet{nagaev:95} contains in the crucial relation (3.7) an incorrect scaling
constant (the constant
$5/4$ there should be $20/27$), which further complicates the derivation of Theorem
\ref{theorem:N_n_A_n}. For this reason we gave a simplified and self-contained treatment above.
}
\end{rem}

\begin{rem}
{\rm \citet{buchta:05} gives the following relation between the sample variances of $N_n$ and $\bar{A}_n$ (using the notation of Theorem 
\ref{theorem:N_n_A_n}):
$$
\frac{(n+1)(n+2)\mbox{\rm var}(\bar{A}_n)}{n^2}=\mbox{\rm var}(N_n)+d_{n+2},
$$
where
$$
d_n=(E N_n)^2-\frac{n\left(EN_{n-1}\right)^2}{n-1}-(2n-1)EN_n +2nEN_{n-1}\sim EN_n\sim\tfrac95\mbox{\rm var}(N_n),\,n\to\infty.
$$
Hence
$$
\mbox{\rm var}(\bar{A}_n)\sim \tfrac{14}5\mbox{\rm var}(N_n),\,n\to\infty,
$$
in accordance with the covariance matrix $\Sigma$ in Theorem 1 in \citet{nagaev_kham:91} (Theorem \ref{theorem:N_n_A_n} above). Note that
the split-up of the variance of
$\bar{A}_n$ corresponds to the split-up (\ref{decomposition_D}) above, where $d_{n+2}$ corresponds to the variance of the exponentials $\xi_i$ in
(\ref{decomposition_D}) and $\mbox{\rm var}(N_n)$ corresponds to the variance of the second term on the right-hand side of 
(\ref{decomposition_D}).

Theorem 2 of \citet{buchta:03} gives for the number of vertices $N_n$ of the convex hull of the points $(0,1)$, $(1,0)$ and $P_1,\dots,P_n$, where $P_1,\dots,P_n$ is a uniform sample from the interior of the triangle with vertices $(0,0)$, $(0,1)$ and $(1,0)$:
$$
EN_n=\tfrac13\left\{2\sum_{i=1}^n\frac1{i}+1\right\},
$$
and
$$
\mbox{var}\left(N_n\right)=\tfrac1{27}\left\{10\sum_{i=1}^n\frac1{i}+12\sum_{i=1}^n\frac1{i^2}-28+\frac{12}{n+1}\right\}.
$$
This gives:
\begin{equation}
\label{Buchta_relations}
EN_n\sim\tfrac23\log n,\qquad\mbox{var}\left(N_n\right)\sim\tfrac{10}{27}\log n,\qquad n\to\infty,
\end{equation}
which corresponds to the distribution results derived in \citet{gr:88}, as is also noted in \citet{buchta:03}.

The results in \citet{gr:88} and \citet{nagaev_kham:91} only imply that one gets a normal limit distribution for the number of vertices of the convex hull of a uniform sample from the interior of a convex polygon with $r$ vertices by centering with $\tfrac23r\log n$ and dividing by $(\tfrac{10}{27}r\log n)^{1/2}$. It is not proved there that the variance of the number of vertices itself is also of order $\tfrac{10}{27}r\log n$. In principle one could have a central limit theorem where the scaling needed to get the central limit result is different from what one gets from the actual variance.

However, the only thing that still seems needed to go from (\ref{Buchta_relations}) to the result that the variance itself is also of order $\tfrac{10}{27}r\log n$ seems the appropriate use of the  independence of what happens in the corners of the polygons, so that one can conclude that the variance is the sum of the variances of the number of vertices in these corners. Moreover, one has to go from what happens in the triangle to what happens in the corners of the polygon.
This is the subject of current research by Buchta. Results for higher moments of the convex hull of a uniform sample from triangle with vertices $(0,0)$, $(0,1)$ and $(1,0)$ are given in \citet{buchta:11}.
}
\end{rem}

\section{Simulations}
\label{sec:simulation}
\setcounter{equation}{0}
Let $N(a,b)$ and $D(a,b)$ be defined as in Theorem \ref{th:CL_for_N_D}. The distribution of these random variables only depends on the ratio $b/a$ and in this section we present some simulation results for these random variables, taking $a=1$ and replacing $b$ by $a$.

The algorithm, given in section 4 of \citet{nagaev:95}, was used to simulate part of the boundary of the convex hull of a Poisson process with intensity 1 in the first quadrant. The starting triangle is bounded by the $x$-axis, $y$-axis and a
line of the form $x+y=c$, where $c>0$. Its area $D_0$ has a standard exponential distribution and the point $W(1)$ is uniformly distributed on the line segment which is the hypotenuse of this triangle. 

With the algorithm of \citet{nagaev:95} we can now generate the points $W(a)$, $a\ge1$, and simulate in this way the distribution of $N(1,a)$ and
$D(1,a)$. We start with $N(1,a)$ and recall the exact expressions for the expectation $EN(1,a)$ and $\mbox{\rm var}(N(1,a))$ from
\citet{gr:88}, Theorem 2.4:
\begin{equation}
\label{EN(1,a)}
E N(1,a)=\tfrac13\log a,
\end{equation}
and
\begin{equation}
\label{var(N(1,a))}
\mbox{\rm var}(N(1,a))=\frac5{27}\log a+\frac49\left(\tan^{-1}\left(\sqrt{a-1}\right)\right)^2
+\frac89\left\{\frac{\tan^{-1}\left(\sqrt{a-1}\right)}{\sqrt{a-1}}-1\right\}.
\end{equation}
As noted on top of page 34 in \citet{cabogr:94}, the formula for the variance of $(N(1,a))$, given in Theorem 2.1 of \citet{gr:88} contained a typo (the argument of the first $\tan^{-1}$ above was $a$ instead of $\sqrt{a-1}$), and the correct formula is in fact given on p.\ 365 of \citet{gr:88} (which we use here). Note that these are exact expressions for $E N(1,a)$ and $\mbox{var}(N(1,a))$ and not asymptotic ones.

The following table shows the means and variances for 10,000 simulations for the values $\log a=10,50$ and $100$.
The exact values are given in 4 decimals accuracy.

\bigskip
\par\noindent
{\bf Table 1.}  Comparison of $EN(1,a)$ and $\mbox{Var}(N(1,a))$ with simulated and asymptotic values.
\medskip
\begin{center} 
\begin{tabular}{|r ||c |c |c | c | c |} \hline
$\log a$        &  simulated            
   &  exact
  & simulated       &  exact & asymptotic\\
&$EN(1,a)$   &$EN(1,a)$ &$\mbox{Var}(N(1,a))$   &$\mbox{Var}(N(1,a))$& $\mbox{Var}(N(1,a))$   \\ 
 \hline
 10  &\ 3.3519 & \ 3.3333  &\  2.1193  &\ 2.0596  &\ 1.8519\\
 50  &\ 16.6668 & \ 16.6667  &\  9.5908  &\  9.4670  &\ 9.2593\\
 100 &\ 33.4259 & \ 33.3333  &\ 18.7039  &\ 18.7263  &\ 18.5185\\
\hline
\end{tabular} 
\end{center}

\vspace{0.4cm}
It is seen from Table 1 that $EN(1,a)$ and $\mbox{Var}(N(1,a))$ are quite close to the simulated values and that, not unexpectedly, for $a=10$ the exact
expression for the variance of $N(1,a)$, given by (\ref{var(N(1,a))}), is closer to the simulated value than the asymptotic value.

We similarly did 10,000 simulations for the values $\log a=10,50$ and $100$ to simulate the behavior of $D(1,a)$.
Using the (corrected) methods of computation of \citet{cabogr:94} (details are given in \citet{piet:11b}), it can be shown that
$$
ED(1,a)=\tfrac13\log a,
$$
and, defining $\a=a-1$, that:
\begin{align*}
&\mbox{\rm var}(D(1,a))\\
&=\frac{14}{27}\log a+\frac2{3\a^2}+\frac4{9\a}-\frac{44}{45}
-\frac{2\{3+\a(3-4\a)\}\tan^{-1}\left(\sqrt{\a}\right)}{9\a^{5/2}}
+\frac49\left(\tan^{-1}\left(\sqrt{\a}\right)\right)^2.
\end{align*}
These are again exact expressions for $ED(1,a)$ and $\mbox{var}(D(1,a))$ and not asymptotic ones. We get the following results.

\vspace{0.4cm}
\par\noindent
{\bf Table 2.}  Comparison of $ED(1,a)$ and $\mbox{Var}(D(1,a))$ with simulated and asymptotic values.
\medskip
\begin{center} 
\begin{tabular}{|r ||c |c |c | c | c |} \hline
$\log a$        &  simulated            
   &  exact
  & simulated       &  exact & asymptotic\\
&$ED(1,a)$   &$ED(1,a)$ &$\mbox{Var}(D(1,a))$   &$\mbox{Var}(D(1,a))$& $\mbox{Var}(D(1,a))$   \\ 
 \hline
 10   &\ 3.3664 & \ 3.3333  &\ 5.4089  &\ 5.3040  &\ 5.1852\\
 50  &\ 16.6576 & \ 16.6667 &\  26.1452  &\  26.0448  &\ 25.9259\\
 100 &\ 33.4933 & \ 33.3333  &\ 52.3304  &\ 51.9707  &\ 51.8519\\
\hline
\end{tabular} 
\end{center}

\vspace{0.4cm}
We finally turn our attention to relation (3.7) in \citet{nagaev:95}. This relation gives asymptotic expressions for the expectation and variance of the number $\nu_t$ of
vertices falling in a disk $S_t$ with radius $t$ and center $(0,0)$. On the basis of the results in \citet{gr:88}, it is to be expected that
\begin{equation}
\label{points_in_disk}
E\nu_t\sim \tfrac43\log t,\qquad \mbox{var}\left(\nu_t\right)\sim \tfrac{20}{27}\log t,\,t\to\infty,
\end{equation}
whereas relation (3.7) in Nagaev (1995) gives the above relation for $E\nu_t$, but $(5/4)\log t$ as the asymptotic expression for $\mbox{var}\left(\nu_t\right)$.
The argument for (\ref{points_in_disk}) is that, first of all, $\nu_t$ can be expected to behave asymptotically as the number of vertices with coordinates $x>y$ such that $x<t$ plus the
number of vertices with coordinates $y\ge x$ such that $y<t$, since vertices with large $x$-coordinates will with high probability be very close to the $x$-axis and 
vertices with large $y$-coordinates will with high probability be very close to the $y$-axis. Secondly, again by \citet{gr:88}, the number of vertices with
coordinates $x>y$ such that $x<t$ will behave asymptotically as $N(1,t^2)$, and similarly, the number of vertices with
coordinates $y\ge x$ such that $y<t$ will behave asymptotically as $N(1/t^2,1)$.

By the construction of the algorithm in \citet{nagaev:95}, we can simulate the number of vertices $W(a)$, $a\ge1$, satisfying
$U(a)^2+V(a)^2<t^2$, by running the algorithm till we get a vertex $W(a)$ such that
$$
U(a)^2+V(a)^2\ge t^2.
$$
The resulting asymptotic behavior of $E\nu_t$ and $\mbox{Var}(\nu_t)$ is obtained from this by multiplying the
results by the factor
$2$. The table below shows the result
for 10,000 simulations for the values $\log t=10,50$ and $100$.

\bigskip
\par\noindent
{\bf Table 3.}  Comparison of $E\nu_t$ and $\mbox{Var}(\nu_t)$ with simulated and asymptotic values.
\medskip
\begin{center} 
\begin{tabular}{|r ||c |c |c | c | c |} \hline
$\log t$        &  simulated            
   &  exact
  & simulated       &$(20/27)\log t$  &$(5/4)\log t$ \\
&$E\nu_t$   &$E\nu_t$ &$\mbox{Var}(\nu_t)$   &  &   \\ 
 \hline
 10   &\ 13.0778 & \ 13.3333  &\ 7.2630  &\ 7.40741  &\ 12.5\\
 50  &\ 66.4792 & \ 66.6667  &\  37.6192  &\  37.0370  &\ 62.5\\
 100 &\ 133.1330 & \ 133.3333  &\ 74.542  &\ 74.0741  &\ 125\\
\hline
\end{tabular} 
\end{center}

\vspace{0.3cm}
Table 3 clearly suggests that the factor $5/4$ is much too large and that the correct approximation is indeed
given by (\ref{points_in_disk}) above.

\section{Concluding remarks}
\label{sec:conclusion}
There is a remarkable analogy between the behavior of the left-lower convex hull of the Poisson point process, discussed above, and the least concave majorant of (one-sided) Brownian motion without drift, as analyzed in \citet{piet:83}. In the same way there is an analogy between the behavior of the lower convex hull of the Poisson point process inside a parabola, as analyzed in \citet{gr:88} and \citet{nagaev:95}, and the least concave majorant of Brownian motion with a parabolic drift, as studied in \citet{piet:89} and \citet{piet:11a}. Why this is the case is still somewhat of a mystery and deserves (in my view) further investigation.

\vspace{0.5cm}
\noindent
\acks I want to thank Tomasz Schreiber for sending me the unpublished preprint \citet{nagaev_kham:91} and Christian Buchta for making me aware of \citet{buchta:03} and sending me the preprint \citet{buchta:11}.

\footnotesize
 
\end{document}